\newtheorem{thm}{Theorem}
\newtheorem{cor}{Corollary}
\newtheorem{prop}{Proposition}
\newtheorem{conj}{Conjecture}
\newcommand{\Q}{\mathbb{Q}}
\newcommand{\R}{\mathbb{R}}
\newcommand{\Z}{\mathbb{Z}}
\title{On the distribution of rational squares}
\author{Michael Weiss}
\begin{document}
\maketitle
\section{Introduction}

This paper explores the location of rational squares in the denominator-first lexicographic ordering of an open interval of the form $(a,b) \subset \R^+$.  In the usual ordering on $\R^+$, the rational squares are dense, in the sense that given any two positive $a,b \in \R^+$, there exists $q \in \Q$ such that $a<q^2<b$.  This follows easily from the fact that the rationals are dense in $\R$, and hence we may choose $q \in (\sqrt{a},\sqrt{b})$.  But in the denominator-first lexicographic ordering, the problem of locating rational squares is non-trivial.  By ``denominator-first lexicographic ordering'' we refer to the ordering of rationals in which we first  list all of the rationals in $(a,b)$ of the form $k/1$ (if indeed there are any), in increasing order of their numerator; then we list all of the rationals of the form $k/2$ (if any), again in increasing order of numerator, and omitting any that have already been listed in lower terms; then all of the rationals of the form $k/3$; and so on.  This enumeration, well-known from Cantor's proof that the rationals in $(0,1)$ are countable, produces a well-ordering of $(a,b) \cap \Q$, and we may ask:  Where, in this ordering, do we encounter rational squares -- and in particular, where is the \em first \em rational square?

In this paper, I consider the case where the interval $(a,b)$ is of the form $(a,a+1)$ for $a \in \Z^+ $ (although several of the results proved below hold equally well for $a \in \R^+$). That is, we seek natural numbers $t,s  \in \Z^+$ such that $a < (t/s)^2 < a+1$ or, equivalently, such that $s^2 a < t^2 < s^2 (a+1)$.  I ask the following questions:

\begin{enumerate}

\item  For a given $a \in \Z^+$, what is the least positive integer $s$ such that an integer square lies between $s^2 a$ and $s^2 (a+1)$?  Let $\sigma (a)$ denote the answer to this question.

\item  For given $a \in \Z^+$ and $s \in \Z^+$, how many positive integers $t$ satisfy  $s^2 a < t^2 < s^2 (a+1)$?  Let $\tau_s (a)$ denote the answer to this question.

\end{enumerate}

It will occasionally be useful to refer to the set $T_s(a)$ of all positive integers $t$ such that $s^2 a < t^2 < s^2 (a+1)$; with this definition, $\tau_s(a) = | T_s(a) |$, and $\sigma(a)$ is the smallest value of $s$ such that $T_s(a) \neq \varnothing$.  Equivalently, we may describe $\sigma(a)$ as the smallest denominator among all rational numbers in the open interval $(\sqrt{a}, \sqrt{a+1})$; as the least $s \in \Z^+$ such that the open interval $(s \sqrt{a} , s \sqrt{a+1})$ contains an integer; or as the least $s$ such that
$ s\sqrt{a+1} > 1 + \lfloor s  \sqrt{a}  \rfloor$.  The goal of this paper is to explore the behavior of the sequence $(\tau_s(a))_{s \in \Z^+}$ and of the function  $\sigma (a)$.

Both $\sigma (a)$ and $\tau_s(a)$ can be computed by a relatively simple algorithm.  For example, with $a=8$, we compute $\sigma(8)$ and the first few terms of $(\tau_s(8))_{s \in \Z^+}$ as follows:
\begin{itemize}
\item $2^2 \cdot 8 = 32$ and $2^2 \cdot 9 = 36$, but there are no integer squares strictly between 32 and 36, so $\tau_2(8)=0$ and $\sigma(8)>2$.
\item $3^2 \cdot 8 = 72$ and $3^2\cdot 9 = 81$, but there are no integer squares strictly between 72 and 81, so $\tau_3(8)=0$ and $\sigma(8)>3$.
\item $4^2 \cdot 8 = 128$ and $4^2\cdot 9 = 144$, but again there are no integer squares strictly between 128 and 144, so $\tau_4(8)=0$ and $\sigma(8)>4$.
\item $5^2 \cdot 8 = 200$ and $5^2\cdot 9 = 225$, but again there are no integer squares strictly between 200 and 225, so $\tau_5(8)=0$ and $\sigma(8)>5$.
\item $6^2 \cdot 8 = 288$ and $6^2\cdot 9 = 324$, but this time we find that $17^2 = 289$ lies in the desired range, and is the only such square; so $\tau_6(8)=1$, $\sigma(8)=6$, and $289/36$ is the first rational square in the interval $(8,9)$ in the denominator-first lexicographic ordering.

\end{itemize}

It is known that the first rational (in the sense of this ordering) in any interval $(x,y)$ may be computed from the continued fraction representations of $x$ and $y$.  Specifically, we have the following:

\begin{prop} \cite[p.~654]{knuth1997}
If $x$ and $y$ are irrationals with $0<x<y$ and with regular continued fractions given by

$$x = [a_0; a_1, a_2, \dots, a_{k-1}, a_k, a_{k + 1},\dots]$$
and 
$$y = [a_0; a_1, a_2, \dots, a_{k-1}, b_k, b_{k + 1}, \dots]$$
then the finite continued fraction $z=[a_0; a_1, a_2, \dots, a_{k-1}, \min(a_k,b_k)+1]$ is the first rational in the interval $(x,y)$.
\end{prop}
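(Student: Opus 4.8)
The plan is to realise $x$, $y$ and $z$ as the images, under a single invertible M\"obius transformation with integer coefficients and determinant $\pm 1$, of the tails $[a_k;a_{k+1},\dots]$, $[b_k;b_{k+1},\dots]$ and $\min(a_k,b_k)+1$ respectively, and then to use the fact that such a transformation carries reduced fractions to reduced fractions in a way that is monotone in both numerator and denominator. Before doing this I would dispose of the degenerate case $k=0$: then the hypothesis says $a_0=\lfloor x\rfloor<\lfloor y\rfloor=b_0$, so $z=\min(a_0,b_0)+1=\lfloor x\rfloor+1$ is the least integer exceeding $x$; it lies in $(x,y)$ because $\lfloor x\rfloor+1\le b_0<y$ (using that $y$ is irrational), and since the rationals of denominator $1$ are precisely the integers and are enumerated before everything else, $z$ is the first rational in $(x,y)$.

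Assume now $k\ge 1$, so $a_k,b_k\ge 1$ and $m:=\min(a_k,b_k)\ge 1$. Let $p_{k-1}/q_{k-1}$ and $p_{k-2}/q_{k-2}$ be the last two convergents of the common head $[a_0;a_1,\dots,a_{k-1}]$, with the usual conventions $p_{-1}/q_{-1}=1/0$ and $p_{-2}/q_{-2}=0/1$, so that $q_{k-1}\ge 1$ and $q_{k-2}\ge 0$, and set $T(t)=\dfrac{p_{k-1}t+p_{k-2}}{q_{k-1}t+q_{k-2}}$. Since $|p_{k-1}q_{k-2}-p_{k-2}q_{k-1}|=1$, the matrix of $T$ is unimodular and $T$ is strictly monotone on $(0,\infty)$. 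Writing $\alpha=[a_k;a_{k+1},\dots]$ and $\beta=[b_k;b_{k+1},\dots]$, the standard recursion for convergents gives $x=T(\alpha)$, $y=T(\beta)$ and $z=T(m+1)$ (the last being exactly the finite continued fraction in the statement). Irrationality of $x$ and $y$ forces $\alpha\in(a_k,a_k+1)$ and $\beta\in(b_k,b_k+1)$, and because $a_k\ne b_k$ one checks directly that $m+1$ lies strictly between $\alpha$ and $\beta$; monotonicity of $T$ then puts $z$ strictly between $x=T(\alpha)$ and $y=T(\beta)$, so $z\in(x,y)$.

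The heart of the argument is the denominator bound. Let $p/q\in(x,y)$ be in lowest terms. Then $u:=T^{-1}(p/q)$ is a rational number (as $T^{-1}$ again has integer coefficients) lying, by monotonicity, strictly between $\alpha$ and $\beta$; in particular $u>\min(\alpha,\beta)>m\ge 1$, so we may write $u=r/s$ in lowest terms with $r>s\ge 1$ and $r>ms$, hence $r\ge ms+1$. Applying the unimodular matrix of $T$ to the primitive column vector $(r,s)$ produces the primitive column vector $(rp_{k-1}+sp_{k-2},\,rq_{k-1}+sq_{k-2})$, which is positive since $q_{k-1}\ge 1$ and $q_{k-2}\ge 0$; therefore the reduced denominator is $q=rq_{k-1}+sq_{k-2}$. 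Writing $q_z=(m+1)q_{k-1}+q_{k-2}$ for the denominator of $z$, a short rearrangement gives
\[
q-q_z=(r-ms-1)q_{k-1}+(s-1)(mq_{k-1}+q_{k-2})\ge 0 .
\]
Thus no rational in $(x,y)$ has denominator below $q_z$; and since $q_{k-1}\ge 1$ and $mq_{k-1}+q_{k-2}\ge m\ge 1$, equality forces $r=ms+1$ and $s=1$, i.e.\ $u=m+1$ and $p/q=T(m+1)=z$. Hence $z$ is the unique rational of smallest denominator in $(x,y)$, and in particular it is the first in the denominator-first lexicographic order.

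I expect the real work to be bookkeeping rather than conceptual: handling the short-prefix cases $k=0$ and $k=1$ (where the common head or $q_{k-2}$ degenerates), and verifying that every inequality separating $\alpha$, $m+1$ and $\beta$ is \emph{strict} --- which is exactly the point at which the hypothesis that $x$ and $y$ are irrational gets used. Once the change of variables $T$ is in place, the denominator comparison is essentially forced, because $T$ restricts to a bijection of the positive rationals under which the reduced denominator of $p/q$ equals $rq_{k-1}+sq_{k-2}$, visibly increasing in both $r$ and $s$.
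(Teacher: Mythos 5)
Your argument is correct. Note that the paper offers no proof of this proposition at all---it is quoted from Knuth with a citation to p.~654 of \emph{Seminumerical Algorithms}, where it appears as an exercise solution---so there is nothing internal to compare against; what you have written is a complete, self-contained justification of the cited fact, and it is essentially the standard Stern--Brocot-type argument that underlies Knuth's treatment. The two load-bearing points are exactly the ones you isolate: (i) the unimodular change of variables $T$ carries the primitive vector $(r,s)$ to a primitive vector with positive second coordinate, so the reduced denominator of any $p/q\in(x,y)$ is literally $rq_{k-1}+sq_{k-2}$, which is monotone in $r$ and $s$; and (ii) irrationality of $x$ and $y$ makes $\alpha\in(a_k,a_k+1)$ and $\beta\in(b_k,b_k+1)$ strict, which is what places $m+1$ strictly between $\alpha$ and $\beta$ and hence $z$ strictly inside $(x,y)$. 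You were also right to split off $k=0$: there the minimal denominator $1$ need not be attained uniquely (e.g.\ when $\lfloor y\rfloor-\lfloor x\rfloor\ge 2$), and the claim is rescued by minimality of the numerator rather than uniqueness of the denominator---a distinction your $k\ge 1$ argument does not need, since there you prove the minimal-denominator rational is unique. The identity $q-q_z=(r-ms-1)q_{k-1}+(s-1)(mq_{k-1}+q_{k-2})$ checks out, and the positivity of both coefficients gives the equality analysis you claim. I see no gap.
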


To illustrate how this may be used, suppose we wish to compute $\sigma(991)$.  This is equivalent to seeking the first rational number in the interval $(\sqrt{991}, \sqrt{992})$.  We have the continued fraction representations
$$\sqrt{991} = [31 ;2, 12, 10, 2, \dots]$$
$$\sqrt{992} = [31; 2, 62, 2, 62, \dots]$$
and thus the rational we seek is $[31; 2, 13] = \frac{850}{27}$, so $\sigma(991)=27$. We will see below that for any $a$, $\tau_{\sigma(a)}(a)=1$, so in fact
$T_{27}(991) = \{ 850 \} $.

If $a \in \Z^+$  and neither $a$ nor $a+1$ are square, then $\sigma(a)$ is the $a^{\text{th}}$ term in sequence A183162 in the Online Encyclopedia of Integer Sequences \cite{OEIS}, which may be defined as the least $s$ such that the half-open interval $(s \sqrt{a} , s \sqrt{a+1}]$ contains an integer.  Note that the definition of $\sigma(a)$ considered in this paper requires that we find $s$ in the \em{open }\em interval $(s \sqrt{a} , s \sqrt{a+1})$, which excludes the trivial solution $s=1$ that would otherwise be admitted when one of $a$ or $a+1$ is already a square; other natural variations of this problem are to seek integers in $[s \sqrt{a} , s \sqrt{a+1})$ or $[s \sqrt{a} , s \sqrt{a+1}]$.  

We note also that if we define $F(a,s) = \lfloor   s \sqrt{a+1} -   \lfloor s \sqrt{a} \rfloor \rfloor$, then $ \tau_s(a) $ may be computed directly from 
\[ \tau_s(a)  =   \begin{cases} 
      F(a,s), & a\neq n^2-1 \\
      F(a,s)-1, & a = n^2-1 
   \end{cases}
\]

Values for $\sigma(a)$ for $a \leq 500$ are graphed in Figure~\ref{fig:sigma} below.   The graph is surprisingly complex.  We note that values of $\sigma(a)$ fluctuate in a seemingly chaotic pattern, with intermittent spikes located at values of $a$ that are integer squares; that the peak values at those spikes seem to track along some smooth upper-bounding curve; and that between those spikes the values of the functions fluctuate above some trough-shaped lower-bound.  In the sections below, I describe the behavior at the spikes and find sharp upper and lower bounds for $\sigma(a)$ between the spikes.

\begin{figure}[htbp]
  \centering
  \includegraphics[scale=.5]{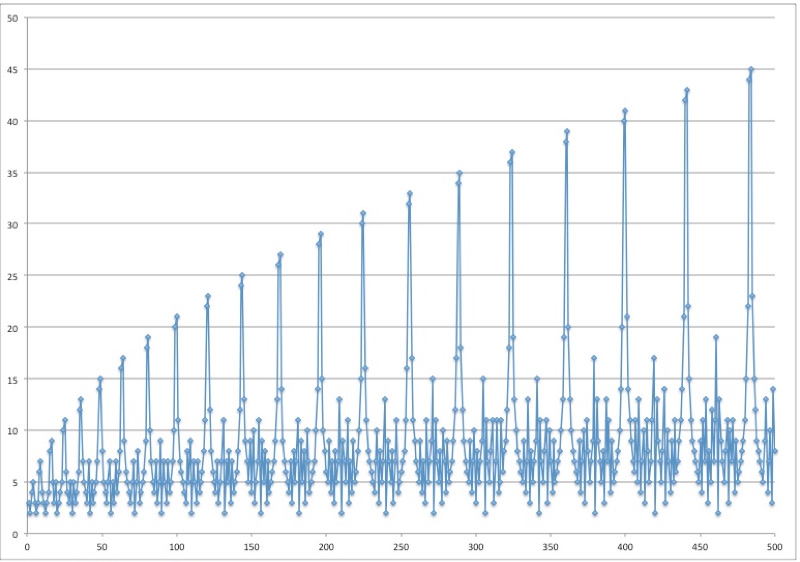}
  \caption{$\sigma(a), 1 \leq a \leq 500$}
  \label{fig:sigma}
\end{figure}

\section{Fundamental properties of $\tau_s(a)$ and $\sigma(a)$}

It is immediate from the definition that $\sigma(a) \geq 2$ for all $a \in \Z^+$.  We begin with the observation that this minimum is obtained for $a=n^2 + n$:

\begin{prop}  For $n \in \Z^+$, $\sigma (n^2+n) = 2$ and $T_2(n^2+n) = \{ 2n+1 \}$.  \end{prop}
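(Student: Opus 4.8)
The plan is to verify both assertions directly, reducing everything to the single case $s = 2$. First I would recall the remark made immediately before the proposition: $\sigma(a) \geq 2$ for every $a \in \Z^+$, simply because the open interval $(a, a+1)$ contains no integer. Consequently it suffices to prove $T_2(n^2+n) = \{2n+1\}$; this simultaneously shows $T_2(n^2+n) \neq \varnothing$, which forces $\sigma(n^2+n) = 2$, and pins down the set exactly.

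Next, writing $a = n^2 + n$, the defining condition $s^2 a < t^2 < s^2(a+1)$ with $s = 2$ unwinds to $4n^2 + 4n < t^2 < 4n^2 + 4n + 4$. The crux is the algebraic identity $(2n+1)^2 = 4n^2 + 4n + 1$, which clearly lies strictly between the two bounds, so $2n+1 \in T_2(n^2+n)$.

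To show $2n+1$ is the only element, I would squeeze it between the neighbouring squares: $(2n)^2 = 4n^2 \leq 4n^2 + 4n$, so any positive integer $t \leq 2n$ fails the strict lower bound, while $(2n+2)^2 = 4n^2 + 8n + 4 \geq 4n^2 + 4n + 4$, so any $t \geq 2n+2$ fails the strict upper bound. Hence $t = 2n+1$ is the unique admissible value, giving $T_2(n^2+n) = \{2n+1\}$, and together with the first paragraph this yields $\sigma(n^2+n) = 2$.

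There is no serious obstacle here; the proof is essentially the observation that $4n^2+4n$ and $4n^2+4n+4$ are consecutive-but-one squares straddling $(2n+1)^2$. The only points requiring a moment's care are that the bounding inequalities $(2n)^2 \leq 4n^2+4n$ and $(2n+2)^2 \geq 4n^2+4n+4$ are the correct non-strict ones (so the endpoints $t = 2n$ and $t = 2n+2$ are genuinely excluded), and the harmless fact that for $n \geq 1$ neither $n^2+n$ nor $n^2+n+1$ is a perfect square, so no degenerate $s = 1$ behaviour intrudes — though this last point is already covered by $\sigma(a) \geq 2$.
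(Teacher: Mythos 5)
Your proof is correct and follows essentially the same route as the paper, which simply records the chain $2^2(n^2+n) < (2n+1)^2 < 2^2(n^2+n+1) < (2n+2)^2$ and concludes. You spell out the additional (trivial) check that $(2n)^2 \leq 4n^2+4n$ and the appeal to $\sigma(a)\geq 2$, but these are the same observations the paper leaves implicit.
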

\begin{proof}  We note simply that $2^2(n^2+n) < (2n+1)^2 < 2^2(n^2+n+1) < (2n + 2)^2$, from which the claim follows.  \end{proof}

The following Theorem establishes some fundamental properties of the the sequence $(\tau_s(a))_{s \in \Z^+}$.

\begin{thm}  Let $k \in \Z^+$ and $a \in \Z^+$.   Then:
\begin{enumerate}
\item If $s > k(\sqrt{a} +  \sqrt{a+1})$, then $\tau_s(a) \geq k$, and hence $\tau_s(a) \to \infty$ as $s \to \infty$.
\item Successive terms in the sequence $(\tau_s(a))_{s \in \Z^+}$ differ by either $0,1$ or $-1$.
\item The first nonzero term of $\tau_s(a)$ is 1; that is, if $s$ is minimal such that $(s^2 a, s^2 (a+1))$ contains a square integer, that square integer is unique.

\end{enumerate}
\end{thm}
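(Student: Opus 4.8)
The plan is to prove part (1) directly, then deduce part (2) from the closed form for $\tau_s(a)$ recorded in the introduction, and finally obtain part (3) as a short consequence of (1) and (2). For part (1), I would work with the real interval $J_s=(s\sqrt a,\,s\sqrt{a+1})$, whose length is $s(\sqrt{a+1}-\sqrt a)=s/(\sqrt a+\sqrt{a+1})$. Any open interval of real length $L$ contains at least $L-1$ integers (its integer count is $\lceil x+L\rceil-\lfloor x\rfloor-1\ge L-1$), hence at least $\lceil L\rceil-1$ integers, so in particular at least $k$ integers as soon as $L>k$. The hypothesis $s>k(\sqrt a+\sqrt{a+1})$ is exactly the inequality $\mathrm{length}(J_s)>k$; each of the resulting $\ge k$ integers $t\in J_s$ is positive and satisfies $s^2a<t^2<s^2(a+1)$, i.e. $t\in T_s(a)$, so $\tau_s(a)\ge k$. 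Since $k$ was arbitrary this shows $\tau_s(a)\to\infty$, and along the way it confirms that $\sigma(a)$ is well-defined, which I will use in part (3).

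For part (2), I would pass to the closed form $\tau_s(a)=F(a,s)$ (and $\tau_s(a)=F(a,s)-1$ exactly when $a=n^2-1$) with $F(a,s)=\lfloor s\sqrt{a+1}-\lfloor s\sqrt a\rfloor\rfloor$. Since $\lfloor s\sqrt a\rfloor\in\Z$, setting $u_s:=\lfloor s\sqrt{a+1}\rfloor$ and $v_s:=\lfloor s\sqrt a\rfloor$ we have $F(a,s)=u_s-v_s$, and because the correction term $-1$ (when present) does not depend on $s$, it suffices to show $(u_{s+1}-v_{s+1})-(u_s-v_s)\in\{-1,0,1\}$. The key input is the elementary fact that for any real $\alpha$ one has $\lfloor(s+1)\alpha\rfloor-\lfloor s\alpha\rfloor\in\{\lfloor\alpha\rfloor,\lfloor\alpha\rfloor+1\}$, with equality to $\alpha$ when $\alpha\in\Z$. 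I would apply this with $\alpha=\sqrt{a+1}$ and with $\alpha=\sqrt a$ and split into three cases. If neither $a$ nor $a+1$ is a perfect square, then $\lfloor\sqrt a\rfloor=\lfloor\sqrt{a+1}\rfloor=:m$, so $u_{s+1}-u_s$ and $v_{s+1}-v_s$ both lie in $\{m,m+1\}$ and their difference lies in $\{-1,0,1\}$. If $a=m^2$, then $v_{s+1}-v_s=m$ exactly, while $u_{s+1}-u_s\in\{m,m+1\}$ since $\lfloor\sqrt{m^2+1}\rfloor=m$, giving a difference in $\{0,1\}$. If $a=n^2-1$ (so $n\ge2$), then $u_{s+1}-u_s=n$ exactly, while $v_{s+1}-v_s\in\{n-1,n\}$ since $\lfloor\sqrt{n^2-1}\rfloor=n-1$, again giving a difference in $\{0,1\}$. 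Hence $\tau_{s+1}(a)-\tau_s(a)\in\{-1,0,1\}$ in all cases.

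For part (3), I would use the trivial observation $\tau_1(a)=0$ — no perfect square lies strictly between the consecutive integers $a$ and $a+1$ — together with parts (1) and (2). By (1) the sequence $(\tau_s(a))_s$ is eventually positive, so there is a least index $s_0$ with $\tau_{s_0}(a)>0$; then $s_0\ge2$ and $\tau_{s_0-1}(a)=0$, and (2) forces $\tau_{s_0}(a)\le\tau_{s_0-1}(a)+1=1$, hence $\tau_{s_0}(a)=1$. Since $s_0=\sigma(a)$ by definition, the square integer lying in $(s_0^2a,\,s_0^2(a+1))$ is unique. I expect the only real obstacle to be the case analysis in part (2) at perfect squares: when $a$ or $a+1$ is a square, one endpoint of $(s\sqrt a,s\sqrt{a+1})$ is itself an integer that must be excluded — precisely what the $-1$ correction in the formula for $\tau_s(a)$ encodes — and one has to check that in those cases the relevant floor difference is pinned down exactly rather than to a two-element set. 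A crude length estimate is not enough here: ``an interval of length $L$ contains between $L-1$ and $L+1$ integers'' only yields $|\tau_{s+1}(a)-\tau_s(a)|\le 2$, so the sharper floor-difference identity is essential.
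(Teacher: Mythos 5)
Your proposal is correct, and parts (1) and (3) match the paper's argument (the paper proves (1) by the same interval-length estimate and dismisses (3) as an immediate consequence of (2); your explicit observation that $\tau_1(a)=0$ is the small detail the paper leaves implicit). Part (2), however, is proved by a genuinely different route. The paper argues combinatorially on the sets $T_s(a)$ themselves: writing $n=\lfloor\sqrt a\rfloor$, it shows that if $\{t,t+1,\dots,t+k\}\subset T_s(a)$ then $\{t-n,\dots,t+k-1-n\}\subset T_{s-1}(a)$ and $\{t+1+n,\dots,t+k+n\}\subset T_{s+1}(a)$, so that $\tau_{s\pm1}(a)\ge\tau_s(a)-1$. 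You instead pass to the closed form $\tau_s(a)=\lfloor s\sqrt{a+1}\rfloor-\lfloor s\sqrt a\rfloor$ (minus a constant correction when $a=n^2-1$) and use the identity $\lfloor(s+1)\alpha\rfloor-\lfloor s\alpha\rfloor\in\{\lfloor\alpha\rfloor,\lfloor\alpha\rfloor+1\}$, with exact value $\alpha$ when $\alpha\in\Z$. Both are sound. Your version leans on the formula for $\tau_s(a)$ that the paper states without proof in the introduction, so for completeness you should include the one-line endpoint count that justifies it (the number of integers in an open interval $(x,y)$ is $\lceil y\rceil-\lfloor x\rfloor-1$, which is $\lfloor y\rfloor-\lfloor x\rfloor$ unless $y\in\Z$). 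In exchange, your case analysis gives slightly more than the theorem asks: for $a=m^2$ and $a=n^2-1$ it pins the difference to $\{0,1\}$, i.e.\ the sequence $(\tau_s(a))_s$ is non-decreasing there, a fact the paper only derives later in Section 5.2. The paper's shifting argument, on the other hand, is the one reused in the remark following Conjecture 1 (a decrease can only come from some $t\in T_s(a)$ with $t+n<(s+1)\sqrt a$), so it carries structural information that the floor-function computation hides.
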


\begin{proof}
For (1), we note that if $s > k(\sqrt{a} + \sqrt{a+1})$ then $s \sqrt{a+1}  - s \sqrt{a} > k$, and hence the open interval $(s \sqrt{a}, s\sqrt{a+1})$ contains at least $k$ positive integers.

For (2), write $n = \lfloor \sqrt{a} \rfloor$.  Suppose that 
$$ \{ t, t + 1, t + 2, \dots, \ t + k - 1, t + k \} \subset T_s(a)$$.
Then
$$ \{ t-n, t + 1-n, t + 2 - n, \dots, \ t + k - 1-n \} \subset T_{s-1}(a)$$
because $s \sqrt{a} < t$ and $n \leq \sqrt{a}$ imply that $(s-1) \sqrt{a} < t - n$; and $t + k < s\sqrt{a+1}$ and $\sqrt{a+1} \leq n + 1$ imply that $t + (k-1) - n < (s-1) \sqrt{a+1}$.  By similar arguments, if
$$ \{ t, t + 1, t + 2, \dots, \ t + k - 1, t + k \} \subset T_s(a)$$.
then also
$$ \{ t + 1 + n, t + 2 + n,  \dots, \ t + k + n \} \subset T_{s+1}(a)$$

Property (3) is an immediate consequence of (2).
\end{proof}

We turn now to the function $\sigma(a)$.  
The following theorem establishes both an upper and lower bound for $\sigma(a)$.

\begin{thm} 

Fix $a \in \Z^+$,  and write $n = \lfloor \sqrt{a} \rfloor$ (so that $a = n^2 + b$ for some $0 \leq b \leq 2n$) and  $m = n+1$ (so that  $a = m^2 - c$ for some $1 \leq c \leq 2m-1$).  With this notation, let $\sigma_l(a) = \frac {n + \sqrt{a+1}}{b+1} $ and $\sigma_r(a) = \frac {m + \sqrt{a}}{c} $.  Then:

\begin{enumerate}
\item For $k \in \Z^+$, if $s \leq  \max \{k \cdot \sigma_l(a), k \cdot \sigma_r(a) \}$ then $\tau_s(a) < k$.
\item Define $\sigma_k(a) = \lfloor \max \{ k \cdot \sigma_l(a) , k \cdot \sigma_r(a) \} \rfloor + 1$.  Then for all $a$, $\tau_s(a) \geq k \implies s \geq \sigma_k(a)$.
\item $\sigma_1(a) \leq \sigma (a) \leq \lceil  \sqrt{a+1}+\sqrt{x}  \rceil$

\end{enumerate}
\end{thm}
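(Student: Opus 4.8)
Statements (1) and (2) are the heart of the matter, and (3) falls out of (2) at $k=1$ together with part (1) of the preceding theorem. The plan for (1) is to prove its contrapositive: assume $\tau_s(a)\ge k$ and deduce $s>\max\{k\sigma_l(a),k\sigma_r(a)\}$. Part (2) is then just (1) plus integrality of $s$, and (3) is (2) at $k=1$ plus a short irrationality remark.

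\textbf{The core estimate.} Suppose $\tau_s(a)\ge k$, so the open interval $(s\sqrt a,\,s\sqrt{a+1})$ contains $k$ consecutive integers $t_0<t_0+1<\dots<t_0+k-1$. On the left: since $t_0>s\sqrt a\ge sn$ and $sn\in\Z$, we get $t_0\ge sn+1$, and combining with $t_0+k-1<s\sqrt{a+1}$ gives $sn+k<s\sqrt{a+1}$, i.e. $k<s(\sqrt{a+1}-n)$. Now rationalize: $(\sqrt{a+1}-n)(\sqrt{a+1}+n)=(a+1)-n^2=b+1$, so $\sqrt{a+1}-n=(b+1)/(\sqrt{a+1}+n)$, and the inequality becomes $s>k(n+\sqrt{a+1})/(b+1)=k\sigma_l(a)$. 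Symmetrically on the right: since $\sqrt{a+1}\le m$ and $sm\in\Z$, $t_0+k-1<s\sqrt{a+1}\le sm$ gives $t_0+k-1\le sm-1$, and with $t_0>s\sqrt a$ this yields $s\sqrt a<sm-k$, i.e. $k<s(m-\sqrt a)$; rationalizing via $(m-\sqrt a)(m+\sqrt a)=m^2-a=c$ gives $s>k(m+\sqrt a)/c=k\sigma_r(a)$. Hence $s>\max\{k\sigma_l(a),k\sigma_r(a)\}$, which is exactly the contrapositive of (1).

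\textbf{Deducing (2) and (3).} Part (2) is immediate: the contrapositive of (1) says $\tau_s(a)\ge k\implies s>M$ with $M=\max\{k\sigma_l(a),k\sigma_r(a)\}$, and a positive integer exceeding $M$ is at least $\lfloor M\rfloor+1=\sigma_k(a)$. For the lower bound in (3), $\sigma(a)$ is by definition the least $s$ with $\tau_s(a)\ge1$, so (2) at $k=1$ gives $\sigma(a)\ge\sigma_1(a)$. For the upper bound (reading the $\sqrt x$ in the statement as $\sqrt a$), set $s^\ast=\lceil\sqrt{a+1}+\sqrt a\,\rceil$. I claim $\sqrt{a+1}+\sqrt a$ is irrational: otherwise its reciprocal $\sqrt{a+1}-\sqrt a$ would be rational too, hence $\sqrt a$ and $\sqrt{a+1}$ would both be rational, hence both integers (a rational square root of an integer is an integer) — impossible, as consecutive positive integers are not both squares. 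So $s^\ast>\sqrt{a+1}+\sqrt a$, and part (1) of the preceding theorem with $k=1$ gives $\tau_{s^\ast}(a)\ge1$, i.e. $T_{s^\ast}(a)\ne\varnothing$; hence $\sigma(a)\le s^\ast$.

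\textbf{Anticipated difficulties.} This is mostly careful bookkeeping rather than a deep argument; the one genuine idea is the rationalization step, which makes the combinatorial ``gap count'' collapse into the closed forms $\sigma_l$ and $\sigma_r$. The points needing attention are: (i) the degenerate cases $b=0$ (then $s\sqrt a=sn\in\Z$) and $c=1$ (then $s\sqrt{a+1}=sm\in\Z$), where one should note the integer endpoint itself is excluded from $T_s(a)$ because membership requires the strict inequalities $s^2a<t^2<s^2(a+1)$, so the bounds $t_0\ge sn+1$ and $t_0+k-1\le sm-1$ still hold; (ii) tracking strict versus non-strict inequalities (and floors) exactly when converting ``$k$ integers fit'' into an inequality on $s$; and (iii) the irrationality of $\sqrt{a+1}+\sqrt a$, without which the ceiling need not be a \emph{strict} overshoot and the appeal to the preceding theorem would fail. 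None of these looks like a real obstruction.
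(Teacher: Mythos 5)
Your proof is correct and is essentially the paper's own argument read in the contrapositive direction: both rest on the same rationalizations $(\sqrt{a+1}-n)(\sqrt{a+1}+n)=b+1$ and $(m-\sqrt{a})(m+\sqrt{a})=c$, and on counting the integers trapped between $sn$ and $s\sqrt{a+1}$ (resp.\ between $s\sqrt{a}$ and $sm$). Your additional care about the degenerate cases $b=0$, $c=1$ and about the irrationality of $\sqrt{a}+\sqrt{a+1}$ (which the paper leaves implicit when it invokes Theorem 1, property (1) for the upper bound in (3)) is correct and fills in details the paper glosses over.
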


\begin{proof}

\begin{enumerate}

\item If $s \leq k \cdot \frac{n + \sqrt{a+1}}{b+1}$, then $(sn+k)^2 \geq s^2 (a+1)$, and therefore the interval $(s \sqrt{a}, s \sqrt{a+1})$ contains at most the $k-1$ integers $sn+1, sn+2, \dots sn+k-1$.  Likewise if $s \leq k \cdot \frac{ m + \sqrt{a} }{c}$, then $(sm-k)^2 \leq s^2a$, and therefore the interval $(s \sqrt{a}, s \sqrt{a+1})$ contains at most the $k-1$ integers $sm-1, sm-2, \dots sm-(k-1)$.  Now if $s \leq k \cdot \max \{ \sigma_l(a), \sigma_r(a) \}$ then at least one of these two conditions holds, whence we have $\tau_s(a) < k$.

\item is immediate from (1).

\item  The lower bound is  (2) with $k=1$, and the upper bound follows from Theorem 1, property (1) with $k=1$.  
\end{enumerate}
\end{proof}

\begin{figure}[htbp]
  \centering
  \includegraphics[scale=.25]{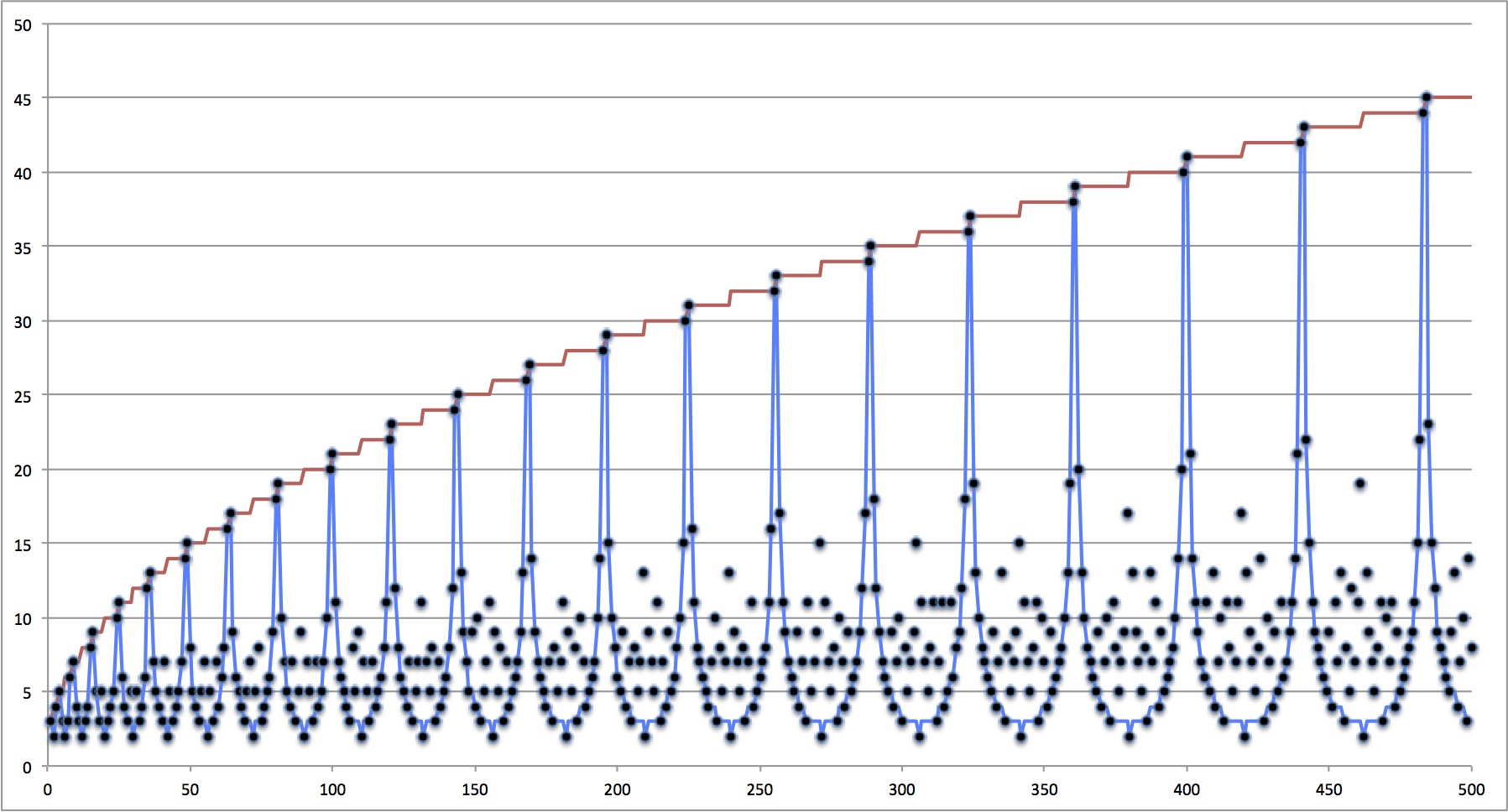}
  \caption{$\sigma(a), 1 \leq a \leq 500, a \in \Z^+$, with upper and lower bounds}
  \label{fig:bounds}
\end{figure}

Figure~\ref{fig:bounds} shows values of $\sigma(a), a \in \Z^+, a \leq 500$ (shown as disconnected points) together with the lower bound (in blue) and upper bound (in red).  We note that 62.8\% of the data points in this interval lie on the lower bound $\sigma_1(a)$, with the rest fluctuating in a seemingly chaotic fashion above $\sigma_1(a)$.  We also see from Figure~\ref{fig:bounds} that the upper and lower bounds coincide for $a=n^2$ and for $a=n^2-1$; in fact, we have the following corollary:

\begin{cor}
\begin{enumerate}
\item For $n \in \Z^+$, $\sigma(n^2) = 2n+1$ and $\sigma(n^2-1) = 2n$.
\item For $n \in \Z^+$, $T_{2n+1}(n^2) = \{ 2n^2 + n + 1 \}$  and $T_{2n}(n^2-1) = \{2n^2 - 1 \}$.
\end {enumerate}
\end{cor}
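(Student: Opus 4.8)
The plan is to read off both parts from the bounds of Theorem 2 (the $\sigma_l,\sigma_r$ theorem) together with Theorem 1(3), so that essentially no new idea is needed — only careful substitution. For part (1), I would specialize the quantities $\sigma_l(a)$, $\sigma_r(a)$, $\sigma_1(a)$ and the upper bound $\lceil \sqrt{a+1}+\sqrt{a}\,\rceil$ to $a=n^2$ and to $a=n^2-1$, and observe that in each of these two cases the lower and upper bounds coincide, which forces the stated value of $\sigma(a)$.

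Concretely, for $a=n^2$ we have $\lfloor\sqrt a\rfloor=n$, so in the notation of Theorem 2 we get $b=0$ and $c=2n+1$, whence $\sigma_l(n^2)=n+\sqrt{n^2+1}$ and $\sigma_r(n^2)=\frac{(n+1)+n}{2n+1}=1$. Since $\sqrt{n^2+1}$ is irrational and lies strictly between $n$ and $n+1$, the larger of the two is $n+\sqrt{n^2+1}\in(2n,2n+1)$, so $\sigma_1(n^2)=\lfloor n+\sqrt{n^2+1}\rfloor+1=2n+1$, while the upper bound is $\lceil n+\sqrt{n^2+1}\,\rceil=2n+1$; Theorem 2(3) then pins $\sigma(n^2)=2n+1$. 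For $a=n^2-1$ (here $n\ge 2$, since $a\in\Z^+$) the point requiring care is that now $\lfloor\sqrt a\rfloor=n-1$, so the role played by ``$n$'' in Theorem 2 is played by $n-1$; this gives $b=2n-2$, $c=1$, hence $\sigma_l(n^2-1)=\frac{(n-1)+\sqrt{n^2}}{2n-1}=1$ and $\sigma_r(n^2-1)=n+\sqrt{n^2-1}$. Again $\sqrt{n^2-1}$ is irrational and lies in $(n-1,n)$, so $\sigma_1(n^2-1)=\lfloor n+\sqrt{n^2-1}\rfloor+1=2n$ and the upper bound $\lceil n+\sqrt{n^2-1}\,\rceil=2n$, giving $\sigma(n^2-1)=2n$.

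For part (2), I would first invoke Theorem 1(3): since $\sigma(a)$ is by definition the least $s$ with $T_s(a)\neq\varnothing$, that property says $\tau_{\sigma(a)}(a)=1$, i.e. $|T_{\sigma(a)}(a)|=1$. It therefore suffices to exhibit a single element of each set. I would check directly that $t=2n^2+n+1$ lies in $T_{2n+1}(n^2)$, via the one-line identities $t^2-(2n+1)^2 n^2=2n(2n+1)+1>0$ and $(2n+1)^2(n^2+1)-t^2=2n>0$; and that $t=2n^2-1$ lies in $T_{2n}(n^2-1)$, via $t^2-(2n)^2(n^2-1)=1>0$ and $(2n)^2 n^2-t^2=4n^2-1>0$. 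Combined with the uniqueness just noted, this yields $T_{2n+1}(n^2)=\{2n^2+n+1\}$ and $T_{2n}(n^2-1)=\{2n^2-1\}$.

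I do not anticipate a real obstacle; the only genuine pitfall is the bookkeeping for $a=n^2-1$, where $\lfloor\sqrt a\rfloor=n-1$ rather than $n$, so one must substitute this correctly into the formulas for $\sigma_l$, $\sigma_r$ and track the resulting $b$ and $c$. A secondary point worth stating explicitly is the irrationality of $\sqrt{n^2+1}$ and $\sqrt{n^2-1}$, so that the relevant floors and ceilings fall strictly inside $(2n,2n+1)$ and $(2n-1,2n)$ respectively; this is immediate (neither $n^2\pm1$ can be a perfect square for $n\ge 1$) but is what makes the floor/ceiling evaluations rigorous.
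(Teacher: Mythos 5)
Your proof is correct and takes essentially the same route as the paper's: part (1) by checking that the lower bound $\sigma_1(a)$ from Theorem 2 and the upper bound $\lceil \sqrt{a+1}+\sqrt{a}\,\rceil$ coincide at $a=n^2$ and $a=n^2-1$, and part (2) by exhibiting the claimed element via direct inequalities and invoking Theorem 1(3) for uniqueness. Your write-up is simply more explicit than the paper's about which of $\sigma_l,\sigma_r$ realizes the maximum and about the index shift $\lfloor\sqrt{n^2-1}\rfloor=n-1$.
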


\begin{proof}
\begin{enumerate}
\item By Theorem 2, we have
$\sigma(n^2) \geq \lfloor \sigma_l(n^2) \rfloor + 1 = \lfloor n + \sqrt{n^2+1} \rfloor +1 = 2n+1$,
but also
$\sigma(n^2) \leq  \lceil  \sqrt{n^2+1}+\sqrt{n^2}  \rceil = 2n + 1$.  Likewise,
$\sigma(n^2-1) \geq \lfloor \sigma_r(n^2-1) \rfloor + 1 = \lfloor n + \sqrt{n^2-1} \rfloor +1 = 2n$
but also
$\sigma(n^2-1) \leq  \lceil  \sqrt{n^2}+\sqrt{n^2-1}  \rceil = 2n$.

\item We note that the inequality $ (2n+1)^2 n^2 < (2n^2 + n + 1)^2 < (2n+1)^2(n^2 + 1)  $ (which may be verified directly) shows that $2n^2 + n + 1 \in T_{2n+1}(n^2)$.  By (1) and Thm. 1 (3), $2n+1$ is the only element in $T_{2n+1}(n^2)$.  The proof that $T_{2n}(n^2-1) = \{2n^2 - 1 \}$ is similar.
\end{enumerate}
\end{proof}

We conclude this section with two visual representations of the structure of these sets.  Figure~\ref{fig:T-sets-heat-map} shows a grid representing pairs $(a,s)$,  $8 \leq a \leq 256$ and $2 \leq s \leq 100$, with cells shaded according to the size of $\tau_s(a)$.  Figure~\ref{fig:T-sets-heat-map} thus constitutes a visual answer to the question ``Where are the rational squares?''  Figure~\ref{fig:delta-tau-heat-map} shows a similar grid, with cells colored 
black if 
$\tau_{s}(a) - \tau_{s-1}(a) = 1$, 
red if
$\tau_{s}(a) - \tau_{s-1}(a) = -1$, 
and white if 
$\tau_{s}(a) - \tau_{s-1}(a) = 0$.  (By Theorem 1, Property (2), these are the only possibilities.)  Thus the vertical columns of Figure~\ref{fig:T-sets-heat-map} show how the sequences $\tau_s(a)$ change with $s$. 

\begin{figure}[htbp]
  \centering
  \includegraphics[scale=.7]{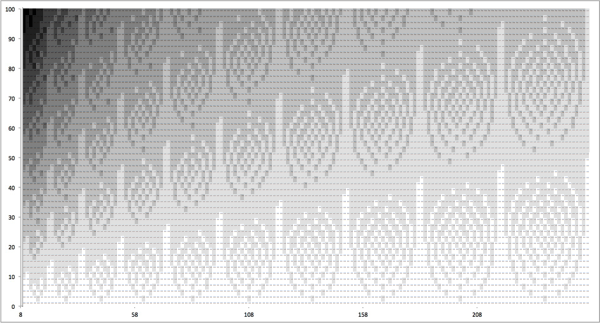}
  \caption{Density of $\tau_s(a), 8 \leq a \leq 256$}
  \label{fig:T-sets-heat-map}
\end{figure}

\begin{figure}[htbp]
  \centering
  \includegraphics[scale=.7]{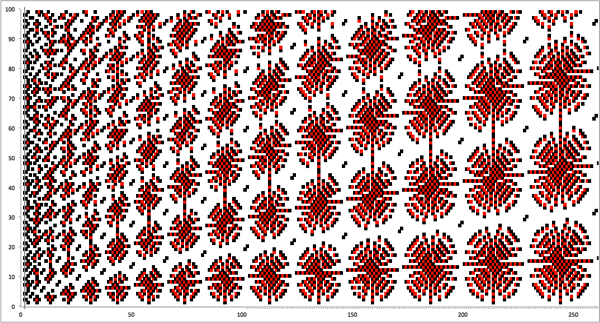}
  \caption{Changes in $\tau_s(a), 1 \leq a \leq 256$}
  \label{fig:delta-tau-heat-map}
\end{figure}

\section{When is $\sigma(a)$ on the lower bound $\sigma_1(a)$?}

The preceding results (and particular the graph of Figure~\ref{fig:bounds}) lead us naturally to seek some criterion for when $\sigma(a) = \sigma_1(a)$.  We prove the following result:

\begin{thm}
Let $a \in \Z^+$.  Then $\sigma_1(a)=\sigma(a)$ if and only if one of the following conditions holds:
\begin{enumerate}

\item  $\lfloor \sigma_l(a) \rfloor < \lfloor \sigma_l (a-1) \rfloor$
\item  $\lfloor \sigma_r(a) \rfloor <\lfloor \sigma_r(a+1) \rfloor$

\end{enumerate}
\end{thm}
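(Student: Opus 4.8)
The plan is to convert $\sigma(a)=\sigma_1(a)$ into a statement about whether one short interval contains an integer, and then to extract conditions (1) and (2) from it. Throughout I will assume that neither $a$ nor $a+1$ is a perfect square; the remaining values $a=n^2,\,n^2-1$ satisfy $\sigma(a)=\sigma_1(a)$ by Corollary~1, and, being exactly where $\sqrt a$ or $\sqrt{a+1}$ is rational, are treated separately. Write $n=\lfloor\sqrt a\rfloor$, $m=n+1$, $u=\sqrt a-n$, $v=\sqrt{a+1}-n$, so $0<u<v<1$ with $u,v$ irrational. Rationalising denominators gives
\[\frac1u=\sigma_l(a-1),\quad \frac1v=\sigma_l(a),\quad \frac1{1-u}=\sigma_r(a),\quad \frac1{1-v}=\sigma_r(a+1),\]
and hence the conjugacy relations $\frac1{\sigma_l(a-1)}+\frac1{\sigma_r(a)}=1$ and $\frac1{\sigma_l(a)}+\frac1{\sigma_r(a+1)}=1$; in particular $\sigma_l(a)\ge\sigma_r(a)\iff u+v\le1\iff\sigma_l(a-1)\ge\sigma_r(a+1)$. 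By Theorem~2(3) we have $\sigma(a)\ge\sigma_1(a)$, so $\sigma(a)=\sigma_1(a)$ iff $\tau_{\sigma_1(a)}(a)\ge1$, i.e.\ iff $(\sigma_1(a)u,\ \sigma_1(a)v)$ contains an integer (this is the interval $(\sigma_1(a)\sqrt a,\ \sigma_1(a)\sqrt{a+1})$ translated by $\sigma_1(a)n\in\Z$). The first goal is to prove
\begin{equation*}\sigma(a)=\sigma_1(a)\ \iff\ \sigma_1(a)<\sigma_l(a-1)\ \text{ or }\ \sigma_1(a)<\sigma_r(a+1).\tag{$\star$}\end{equation*}

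Write $s=\sigma_1(a)=\lfloor\max(\sigma_l(a),\sigma_r(a))\rfloor+1$, which exceeds both $\sigma_l(a)$ and $\sigma_r(a)$. For the ``$\Leftarrow$'' direction of $(\star)$ no case split is needed: $sv=s/\sigma_l(a)>1$ and $s(1-u)=s/\sigma_r(a)>1$, so if $s<\sigma_l(a-1)=1/u$ then $su<1<sv$ and $1\in(su,sv)$, while if $s<\sigma_r(a+1)=1/(1-v)$ then $su<s-1<sv$ and the integer $\lfloor su\rfloor+1\le s-1$ lies in $(su,sv)$. For ``$\Rightarrow$'' I split on the sign of $u+v-1$. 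If $u+v<1$ then $\max(\sigma_l(a),\sigma_r(a))=\sigma_l(a)$, so $s=\lfloor1/v\rfloor+1$ and $sv=(\lfloor1/v\rfloor+1)v\in(1,2)$; hence the only integer that can lie in $(su,sv)$ is $1$, and it does so exactly when $su<1$, i.e.\ $s<\sigma_l(a-1)$. If $u+v>1$ then symmetrically $s=\lfloor1/(1-u)\rfloor+1$, $s(1-u)\in(1,2)$, which forces $\lfloor su\rfloor=s-2$, so $(su,sv)$ meets $\Z$ iff $s-1<sv$, i.e.\ $s<\sigma_r(a+1)$. Since $\sigma_r(a+1)\le\sigma_l(a-1)$ in the first case and $\sigma_l(a-1)\le\sigma_r(a+1)$ in the second, in each case the disjunction in $(\star)$ collapses to the single relevant inequality, and $(\star)$ follows.

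It then remains to translate the two inequalities. Using $\lfloor\max(x,y)\rfloor=\max(\lfloor x\rfloor,\lfloor y\rfloor)$ and the irrationality of $\sigma_l(a-1)$, the inequality $\sigma_1(a)<\sigma_l(a-1)$ is equivalent to $\max(\lfloor\sigma_l(a)\rfloor,\lfloor\sigma_r(a)\rfloor)<\lfloor\sigma_l(a-1)\rfloor$, i.e.\ to ``$\lfloor\sigma_l(a)\rfloor<\lfloor\sigma_l(a-1)\rfloor$ and $\lfloor\sigma_r(a)\rfloor<\lfloor\sigma_l(a-1)\rfloor$''. The first clause is condition~(1); and condition~(1) implies the second, because (since $\sigma_l(a)>1$ gives $\lfloor\sigma_l(a)\rfloor\ge1$) it forces $\sigma_l(a-1)\ge2$, whence the conjugacy relation yields $\sigma_r(a)=\sigma_l(a-1)/(\sigma_l(a-1)-1)<2$ and so $\lfloor\sigma_r(a)\rfloor\le1<2\le\lfloor\sigma_l(a-1)\rfloor$. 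Thus $\sigma_1(a)<\sigma_l(a-1)\iff$ condition~(1); the mirror argument, using the other conjugacy relation and $\sigma_r(a)>1$, gives $\sigma_1(a)<\sigma_r(a+1)\iff$ condition~(2). Feeding this into $(\star)$ proves the theorem.

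The step I expect to be the real work is the ``$\Rightarrow$'' direction of $(\star)$: a priori $(su,sv)$ could be long enough to contain an integer that is neither $1$ nor $s-1$, and excluding that requires the exact value of $\lfloor su\rfloor$. What makes this possible is precisely the ``floor of a maximum, plus one'' shape of $\sigma_1(a)$, which traps $sv$ (or $s(1-u)$) inside the window $(1,2)$; the conjugacy identities are then the bridge that lets one pass between the description in terms of $\sigma_l$ at $a-1$ and the one in terms of $\sigma_r$ at $a+1$. The only other delicate point is the bookkeeping at perfect squares, where the irrationality inputs fail; those values are pulled out at the start and settled by Corollary~1.
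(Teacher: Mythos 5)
Your treatment of the generic case (neither $a$ nor $a+1$ a square) is correct, and it is essentially the paper's argument in cleaner clothing: writing $u=\sqrt a-n$, $v=\sqrt{a+1}-n$ and observing $\sigma_l(a)=1/v$, $\sigma_l(a-1)=1/u$, $\sigma_r(a)=1/(1-u)$, $\sigma_r(a+1)=1/(1-v)$ reproduces the paper's identity $k\sqrt{n^2+b+i}=1+nk+\delta_i(\sqrt{n^2+b+i}-n)$ together with the bound $0<\sqrt{n^2+b+i}-n<1$; your ``$sv$ is trapped in $(1,2)$'' step is exactly what that identity is used for. You do add two things the paper leaves implicit: the conjugacy relations that make the disjunction collapse to a single condition in each of the two cases (the paper simply declares the case split and omits case (ii)), and the final translation of $\sigma_1(a)<\sigma_l(a-1)$ into the floor inequality (1), including why the extra clause $\lfloor\sigma_r(a)\rfloor<\lfloor\sigma_l(a-1)\rfloor$ is automatic. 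That part is all sound.

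The genuine problem is your disposal of the excluded values. You assert that $a=n^2$ and $a=n^2-1$ are ``settled by Corollary 1,'' but Corollary 1 only gives the left-hand side of the biconditional; for the theorem to hold there you would also need condition (1) or (2) to be true at those $a$, and under the literal definition of $\sigma_l$ and $\sigma_r$ (recomputing $n=\lfloor\sqrt{a-1}\rfloor$, etc.) neither is. For $a=n^2$ one gets $\lfloor\sigma_l(n^2)\rfloor=2n$ while $\sigma_l(n^2-1)=\frac{(n-1)+n}{2n-1}=1$, and $\lfloor\sigma_r(n^2)\rfloor=1=\lfloor\sigma_r(n^2+1)\rfloor$; concretely, at $a=4$ both conditions fail even though $\sigma(4)=\sigma_1(4)=5$. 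The same happens at $a=n^2-1$. So these values are counterexamples to the statement as written, not cases that come for free. (The defect is arguably in the theorem: the paper's own proof silently restricts to $n^2<a\le n^2+n$ and $n^2+n+1\le a<(n+1)^2$, and its unstated case (ii) would break at $a=n^2-1$ for the same reason your irrationality inputs fail; alternatively one must read $\sigma_l(a-1)$ as $\frac{n+\sqrt a}{b}$ with the \emph{same} $n$, which is $\infty$ when $b=0$.) Either way, you cannot claim these cases are handled; you need to exclude them from the statement or confront the definitional ambiguity explicitly.
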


To prove this theorem, we may consider two distinct cases:  (i)  $n^2 < a \leq n^2 + n$, and (ii) $n^2 + n + 1 \leq a < (n+1)^2$.  Recall that in the first case, we write $a=n^2+b$ and have $\sigma_1(a)=\sigma_l(a)$; we must prove that in this case $\sigma_l(a)=\sigma(a) \iff \sigma_l(a)<\sigma_l(a-1)$.  For the second case, we write $a=(n+1)^2-c$, and have $\sigma_1(a)=\sigma_r(a)$; we must then prove that $\sigma_r(a)=\sigma(a) \iff \sigma_r(a)<\sigma_r(a+1)$.   We present below the proof of the first case; the second case follows similar lines.

Let $k \in \mathbb Z$ be arbitrary, and for $i \in \{0,1\}$ define

\begin{equation}
\delta_i:=k- \frac{n+\sqrt{n^2+b+i}}{b+i}
\end{equation}

Then the hypothesis that $\sigma_l(n^2+b)<\sigma_l(n^2+b-1)$ is equivalent to the existence of $k$ lying strictly between $\frac{n+\sqrt{n^2+b+1}}{b+1}$ and $\frac{n+\sqrt{n^2+b}}{b}$, which is in turn equivalent to the existence of $k$ such that $\delta_0<0<\delta_1$. 

Next we note that for $i\in\{0,1\}$,
\begin{equation}
\begin{aligned}
k \sqrt{n^2+b+i} &= \left( \frac{n+\sqrt{n^2+b+i}}{b+i}+\delta_i\right)\sqrt{n^2+b+i}\\
&=\frac{n^2+b+i+n\sqrt{n^2+b+i}}{b+i}+\delta_i\sqrt{n^2+b+i}\\
&=1+n\cdot \frac{n+\sqrt{n^2+b+i}}{b+i}+\delta_i\sqrt{n^2+b+i}\\
&=1+nk+\delta_i(\sqrt{n^2+b+i}-n).
\end{aligned}
\end{equation}

Note that
$$n < \sqrt{n^2+b+i} < n+1  $$
so that
\begin{equation}  0 < \sqrt{n^2+b+i}-n < 1 \end{equation}

Combining (2) and (3), we find that $k$ lies strictly between $\frac{n+\sqrt{n^2+b+1}}{b+1}$ and $\frac{n+\sqrt{n^2+b}}{b}$ if and only if $1+nk$ lies strictly between $k\sqrt{n^2+b}$ and $k\sqrt{n^2+b+1}$.  We are now ready to prove the following proposition, from which the theorem follows.

\begin{prop} There exists an integer strictly between $\frac{n+\sqrt{n^2+b+1}}{b+1}$ and $\frac{n+\sqrt{n^2+b}}{b}$ 
if and only if there exists an integer strictly between $\sigma_l(n^2+b)\sqrt{n^2+b}$ and $\sigma_l(n^2+b)\sqrt{n^2+b+1}$.
\end{prop}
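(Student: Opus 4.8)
The plan is to exploit a special feature of the multiplier $\sigma_l(a)$ (here $a=n^2+b$): it turns the right-hand endpoint into an explicit near-integer. Writing $L=\sigma_l(a)=\frac{n+\sqrt{a+1}}{b+1}$ and $R=\sigma_l(a-1)=\frac{n+\sqrt{a}}{b}$, I would first read off from equation (2) the two identities obtained by setting $\delta_1=0$ (i.e.\ evaluating at the multiplier $t=L$) and, by the symmetric computation, $\delta_0=0$ at $t=R$, namely
\[ L\sqrt{a+1}=1+nL, \qquad R\sqrt{a}=1+nR, \]
equivalently $L=(\sqrt{a+1}-n)^{-1}$ and $R=(\sqrt{a}-n)^{-1}$. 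Since $a$ and $a+1$ are both non-squares in this case, each of $L,R,L\sqrt a,L\sqrt{a+1}$ is irrational, so ``strictly between'' always means ``in the corresponding open interval'' and no boundary cases arise.

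\textbf{Reducing both sides to a single scalar inequality.} Because $L<R$, the interval $(L,R)$ contains an integer exactly when $\lceil L\rceil<R$, i.e.\ when $\lfloor L\rfloor<\lfloor R\rfloor$. For the right interval, the identities above give its length as
\[ L\sqrt{a+1}-L\sqrt{a}=L\left(\tfrac{1}{L}-\tfrac{1}{R}\right)=1-\tfrac{L}{R}<1, \]
so it contains at most one integer; as its right endpoint is $1+nL$, it contains an integer precisely when the integer $\lfloor 1+nL\rfloor$ still exceeds the left endpoint $L\sqrt a$, that is, when $\{nL\}<1-L/R$ (using $\{L\sqrt{a+1}\}=\{1+nL\}=\{nL\}$). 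Thus the whole proposition collapses to the scalar equivalence
\[ \lceil L\rceil<R \iff \{nL\}<1-\tfrac{L}{R}. \]

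\textbf{The crux and the expected obstacle.} To attack this scalar equivalence I would use that $L$ and $R$ are the positive roots of $(b+1)x^2-2nx-1$ and $bx^2-2nx-1$, obtained by clearing denominators in $L=(\sqrt{a+1}-n)^{-1}$ and $R=(\sqrt a-n)^{-1}$. Setting $K=\lceil L\rceil$, the relation $\lfloor L\rfloor=K-1$ forces $(b+1)K^2-2nK-1>0$, while $\lceil L\rceil<R$ is the sign condition $bK^2-2nK-1<0$; since the two quadratics differ only by $x^2$, the left-hand side of the equivalence is the single inequality $(b+1)K^2-2nK-1<K^2$. The other side I would rewrite through $2nL=(b+1)L^2-1$ and through the equivalence established just above (combining (2) and (3)), applied at $t=K$ and $t=K-1$ to locate $1+nK$ and $1+n(K-1)$ relative to the intervals $(t\sqrt a,\,t\sqrt{a+1})$. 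I expect the genuine difficulty to be exactly here: reconciling the fractional part $\{nL\}$, which a priori depends on the finer continued-fraction data of $L$, with the coarse floor comparison $\lfloor L\rfloor<\lfloor R\rfloor$. The quadratic identities for $L$ and $R$ (with the same coefficients $2n$ and $-1$) are the only tool I would trust to bridge that gap, and carrying out this matching cleanly in both directions is the step I anticipate will require the most care.
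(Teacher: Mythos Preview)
Your setup misreads what the multiplier in the proposition is. You take $\sigma_l(n^2+b)$ to be the real number $L=\frac{n+\sqrt{a+1}}{b+1}$ (consistent with Theorem~2's notation), and the ``endpoint identities'' $L\sqrt{a+1}=1+nL$, $R\sqrt a=1+nR$ you build on hold \emph{only} for that real $L$ (they are precisely the $\delta_1=0$ and $\delta_0=0$ cases of~(2)). But the proposition, as used to prove Theorem~3, requires the multiplier to be the \emph{integer} $k=\lfloor L\rfloor+1=\sigma_1(a)$; the paper's own proof states this explicitly (``$\sigma_l(n^2+b)$ is the smallest integer strictly larger than $\frac{n+\sqrt{n^2+b+1}}{b+1}$''). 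With your reading the statement is simply false: for $n=3$, $b=2$ (so $a=11$) one has $L\approx 2.155$, $R\approx 3.158$, so $(L,R)$ contains the integer $3$, yet $(L\sqrt{11},\,L\sqrt{12})\approx(7.146,\,7.464)$ contains no integer. Consequently the ``single scalar equivalence'' $\lceil L\rceil<R\iff\{nL\}<1-L/R$ that you reduce to is not a true statement, and the plan in your final paragraph would be an attempt to prove something false.

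The paper's argument avoids this by working from the outset with the integer $k=\lceil L\rceil$, for which $\delta_1\in(0,1]$ rather than $\delta_1=0$; then~(2) gives $k\sqrt{a+1}=1+nk+\delta_1(\sqrt{a+1}-n)$ with the correction term in $(0,1)$, and this extra slack is exactly what makes both directions go through in two or three lines. Separately, even granting the correct interpretation, your write-up is a plan rather than a proof: the ``crux'' paragraph describes tools you would use (the quadratics for $L$ and $R$, applying~(2)--(3) at $t=K$ and $t=K-1$) but does not carry out either implication.
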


\begin{proof}
Let $k=\sigma_l(n^2+b)$ .  Assume  there exists an integer strictly between $\frac{n+\sqrt{n^2+b+1}}{b+1}$ and $\frac{n+\sqrt{n^2+b}}{b}$.
Then $k$ is certainly such an integer, because by definition $\sigma_l(n^2+b)$ is the smallest integer strictly larger than $\frac{n+\sqrt{n^2+b+1}}{b+1}$.
Then by the lemma, $1+nk$ is  strictly between $k\sqrt{n^2+b}$ and $k\sqrt{n^2+b+1}$.

Conversely assume that $m$ is an integer strictly between $k\sqrt{n^2+b}$ and $k\sqrt{n^2+b+1}$. 
Define $\delta_{0,1}$ per $(1)$.  By the definition of $k$, we automatically have $0<\delta_1\le 1$.
Then from $(2)$ and $(3)$
$$m<k\sqrt{n^2+b+1}=1+nk+\underbrace{\delta_1(\sqrt{n^2+b+1}-n)}_{\in(0,1)}$$
so that $m\le 1+nk$. Using $(3)$ again we get
$$ 1+nk\ge m>k\sqrt{n^2+b}=1+nk+\delta_0(\sqrt{n^2+b}- n)$$
so that $\delta_0(\sqrt{n^2+b}- n)<0$, and by $(11)$ we conclude $\delta_0<0$. But $\delta_0<0<\delta_1$ is equivalent to $k$ being strictly between 
 $\frac{n+\sqrt{n^2+b+1}}{b+1}$ and $\frac{n+\sqrt{n^2+b}}{b}$.

\end{proof}

\section{Points above $\sigma_1$}

To this point we have done much to explain the data:  We have found both upper and lower bounds, and established criteria for precisely when those bounds are sharp.  As noted previously, that in the interval $0 \leq a \leq 500$,  62.8\% of the data points lie on the lower-bound curve $\sigma_1(a)$ given by Theorem 2.  What about the rest?

Towards understanding the values of $\sigma(a)$ for which $\sigma(a)>\sigma_1(a)$, we we note that $\tau_s(a) = 0 \iff$ for some unique $k\geq 0$,
\begin{equation}
(sn+k)^2 \leq s^2a \mbox{~and~} (sn+k+1)^2 \geq s^2(a+1)
\end{equation}

Solving (4) for $s$, we find that it is satisfied provided 
\begin{equation}
k \frac{n+\sqrt{a}}{b} \leq s \leq (k+1) \frac{n + \sqrt{a+1}}{b+1}
\end{equation}

Similarly, we also observe that $\tau_s(a) = 0 \iff$ for some unique $k\geq 0$,
\begin{equation}
(sm-k)^2 \geq s^2(a+1) \mbox{~and~} (sm-k-1)^2 \leq s^2(a)
\end{equation}

Solving (6) for $s$, we find that it is satisfied provided 
\begin{equation}
k \frac{m+\sqrt{a+1}}{c-1} \leq s \leq (k+1) \frac{m + \sqrt{a}}{c}
\end{equation}

Combining the upper bounds of both (6) and (7), we recognize again the significance of the functions $\sigma_k$ given by
$$\sigma_k(a) = \max \{ \lfloor (k)\frac {n + \sqrt{a+1}}{b+1} \rfloor +1 , \lfloor (k)\frac {m + \sqrt{a}}{c} \rfloor +1 \} $$

Figure~\ref{fig:sigma-k} shows computed values of $\sigma(a)$ together with curves corresponding to $\sigma_k$   for $1 \leq k \leq 15$ and $k=18,19,22,29,\mbox{~and~}40$ on the interval $100^2 \leq a \leq 101^2$.  Note that each of the data points $(a, \sigma(a))$ lies on one of the curves $\sigma_k$.  This representation of the data, useful as it is, nevertheless conceals some other patterns which may be found in the values of $\sigma(a)$.  We turn to these other patterns in the next section.

\begin{figure}[htbp]
  \centering
  \includegraphics[scale=.40]{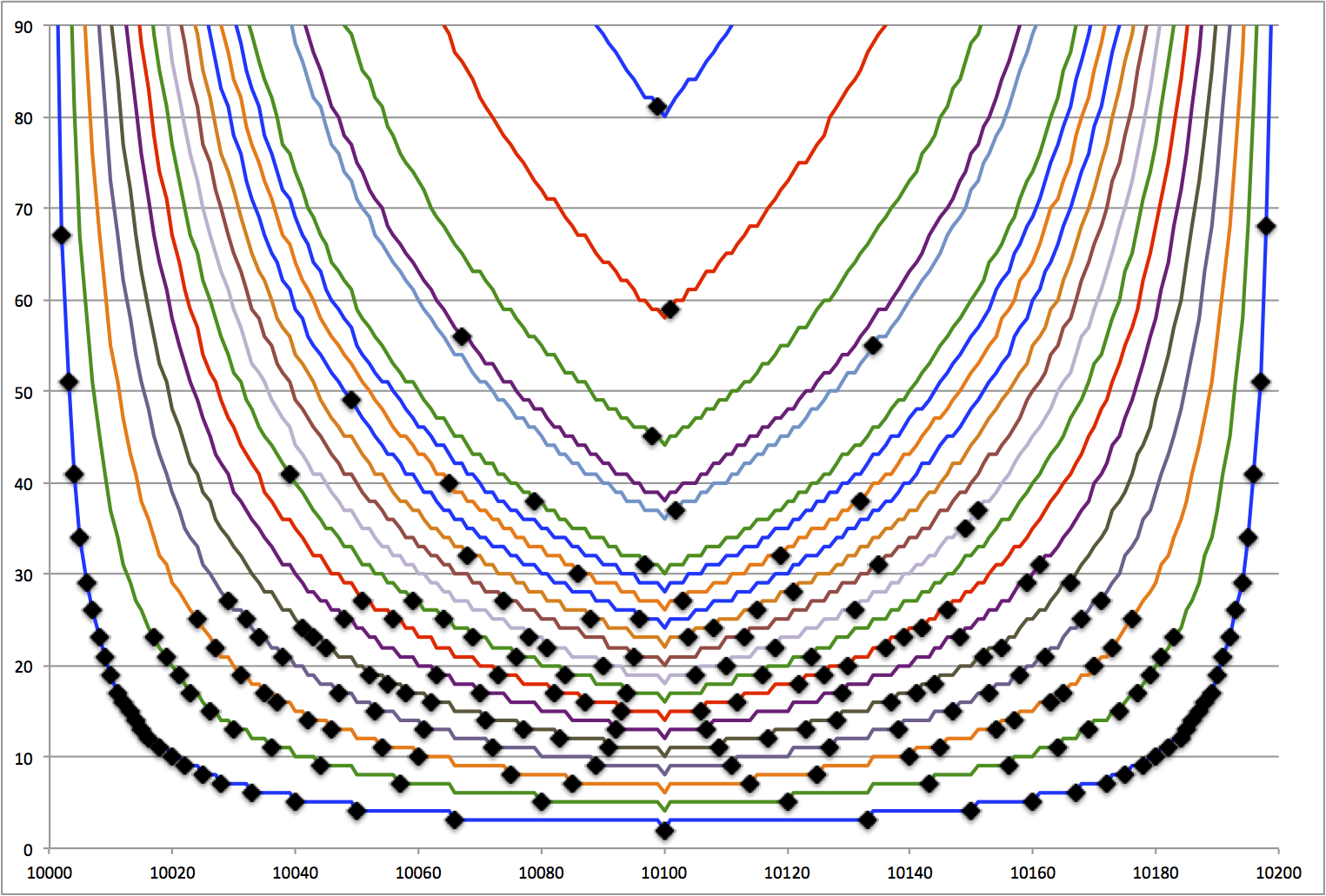}
  \caption{$\sigma(a)$ for $100^2 \leq a \leq 101^2 $ with $\sigma_k(a)$ for $1 \leq k \leq 15$ and $k=18,19,22,29,\mbox{~and~}40$}
  \label{fig:sigma-k}
\end{figure}

\section{Where to next?  Some observations and questions that seem important}

In this section I describe some features of the sequence $(\tau_s(a))_{s \in \Z^+}$ and the function $\sigma(a)$ that have not yet been accounted for.  The observations in this section are meant to be descriptive and suggestive only; where the data suggests possible future directions, they are listed as questions and/or conjectures.

\subsection{Near-symmetry in the graph of $\sigma(a)$}

If we refer to Figure~\ref{fig:sigma-k}, we may observe that many, \textit{but not all}, of the points $(a, \sigma(a))$ come in symmetric pairs.  More precisely, letting $a_{min} = n(n+1)$ be the value for which $\sigma(a_{min})=2$, then $\sigma(a_{min}-d) = \sigma(a_{min}+d)$ holds for approximately 60\% of 
of all $d$ with $0 \leq |d| \leq a_{min}$.  As yet I have no criterion for determining the values of $d$ for which   $\sigma(a_{min}-d) \neq \sigma(a_{min}+d)$.

\subsection{Structure of the sequence $\tau_s(a)$}

A straightforward computation shows that for $a=n^2$ or $a=n^2-1$, if 
$$ s \sqrt{a} < t < s \sqrt{a +1} $$
then
$$ (s+1) \sqrt{a} < t+n < (s+1) \sqrt{a^2 +1} $$
From this observation it follows that the sequence $\tau_s(a)$ is non-decreasing for such values of $a$.    

Figure~\ref{fig:T-sets-heat-map}  and Figure~\ref{fig:delta-tau-heat-map} suggest that these are the only cases in which this happens.  More specifically, we propose:

\begin{conj}
If neither $a$ nor $a+1$ is a square integer, then for any $k>0$ there exists some $s$ such that $\tau_s(a)=k$ but $\tau_{s+1}(a)=k-1$.
\end{conj}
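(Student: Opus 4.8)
The plan is to recast the conjecture as a statement about how quickly the orbit of $(\sqrt a,\sqrt{a+1})$ on the torus $\R^2/\Z^2$ equidistributes, and then to analyze the rate. Since neither $a$ nor $a+1$ is a square, $\sqrt a$ and $\sqrt{a+1}$ are irrational and $\lfloor\sqrt a\rfloor=\lfloor\sqrt{a+1}\rfloor=n$; writing $\sqrt a=n+\theta$ and $\sqrt{a+1}=n+\phi$ with $0<\theta<\phi<1$, counting the integers in $(s\sqrt a,\,s\sqrt{a+1})$ (both endpoints irrational) gives $\tau_s(a)=\lfloor s\sqrt{a+1}\rfloor-\lfloor s\sqrt a\rfloor$, hence
\[
\tau_{s+1}(a)-\tau_s(a)\;=\;\big\lfloor\{s\sqrt{a+1}\}+\phi\big\rfloor-\big\lfloor\{s\sqrt a\}+\theta\big\rfloor .
\]
Each floor on the right is $0$ or $1$, so this difference equals $-1$ exactly when $(\{s\sqrt a\},\{s\sqrt{a+1}\})$ lies in the rectangle $R:=[1-\theta,1)\times[0,1-\phi)$, of positive area $\theta(1-\phi)$. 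A further short computation shows that when the orbit point lies in $R$ one has $\tau_s(a)=k$ precisely for those $s$ in the interval $J_k:=\big(\tfrac{k-1}{\gamma},\ \tfrac{k}{\gamma}-1\big)$, where $\gamma:=\sqrt{a+1}-\sqrt a$; note $|J_k|=\gamma^{-1}-1=\sqrt{a+1}+\sqrt a-1$ is \emph{independent of $k$}. So the conjecture is equivalent to: for every $k>0$ there is an integer $s\in J_k$ with $(\{s\sqrt a\},\{s\sqrt{a+1}\})\in R$.

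The engine I would use is equidistribution. The arithmetic input is the elementary lemma that $1,\sqrt a,\sqrt{a+1}$ are linearly independent over $\Q$ if and only if $a(a+1)$ is not a perfect square, equivalently (as $\gcd(a,a+1)=1$) if and only if neither $a$ nor $a+1$ is a square; one proves this by isolating $\sqrt{a+1}$ in a hypothetical relation, squaring, and noting that none of $a$, $a+1$, $a(a+1)$ can be a square. By Weyl's criterion the sequence $(s\sqrt a,\,s\sqrt{a+1})\bmod 1$ is then equidistributed on $\R^2/\Z^2$, so the set $S:=\{\,s\in\Z^+:(\{s\sqrt a\},\{s\sqrt{a+1}\})\in R\,\}$ of ``descent times'' has density $\theta(1-\phi)>0$, and in particular is infinite. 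Since $\tau_s(a)\to\infty$ by Theorem 1 while $\tau_{s+1}(a)=\tau_s(a)-1$ for each $s\in S$, it follows at once that $\tau_s(a)$ descends from level $k$ to $k-1$ for infinitely many $k$; a fortiori $(\tau_s(a))_s$ is not nondecreasing, which is the robust content of the statement and the exact complement of the remark preceding it.

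What this soft argument does \emph{not} deliver — and this is the crux — is a descent at \emph{every} level $k$: because $|J_k|$ is a fixed constant while $k\to\infty$, plain equidistribution is useless, and one needs an effective statement, namely that the orbit $(s\sqrt a,\,s\sqrt{a+1})\bmod 1$ hits $R$ within every window of $s$ of length $\sqrt{a+1}+\sqrt a-1$. This amounts to a local-discrepancy bound for that two-dimensional orbit, governed by lower bounds for $\lVert p\sqrt a+q\sqrt{a+1}+r\rVert$ over bounded $(p,q,r)\neq0$; a favorable feature is that $R$ is ``lattice-aligned'', with edges $0,\,1,\,n+1-\sqrt a,\,n+1-\sqrt{a+1}$ all lying in $\Z+\Z\sqrt a+\Z\sqrt{a+1}$ (in one variable, by Kesten's theorem, such alignment forces a \emph{bounded} discrepancy). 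Establishing the analogous two-dimensional bound — using the eventual periodicity of the continued fractions of $\sqrt a$ and $\sqrt{a+1}$ to control the joint Diophantine behaviour — is the step I expect to be hardest. I also expect the literal statement to over-reach for small $a$: for instance no $s\in\Z^+$ satisfies $\tau_s(2)=1$ and $\tau_{s+1}(2)=0$ (one checks $\tau_{s'}(2)=0$ only for $s'=1$), and the same failure occurs for several other small $k$ when $a=2$; the reason is that the descent density $\theta(1-\phi)$ can be as small as $\asymp(\sqrt a)^{-1}$, which happens when $a$ is adjacent to a perfect square, most acutely for $a=n^2+1$. Accordingly the two realistic targets are (i) the unconditional ``infinitely many $k$'' version above, and (ii) the sharp ``all sufficiently large $k$'' version proved via the discrepancy estimate, possibly subject to excluding a sparse set of $k$ when $a$ is near a square — with (ii), and specifically its two-dimensional Diophantine input, being the main obstacle.
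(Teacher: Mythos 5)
This statement is left as an open conjecture in the paper, so there is no proof of record to compare against; I can only assess your argument on its own terms, and, as you yourself say, it does not close the problem. The parts you do carry out are correct: for non-square $a$ and $a+1$ one has $\tau_s(a)=\lfloor s\sqrt{a+1}\rfloor-\lfloor s\sqrt a\rfloor$, the descent condition $\tau_{s+1}(a)=\tau_s(a)-1$ is exactly membership of $(\{s\sqrt a\},\{s\sqrt{a+1}\})$ in your rectangle $R$ (this is the same condition the paper records after the conjecture in the form $t+n<(s+1)\sqrt a$), the window computation giving $J_k$ of constant length $\sqrt{a+1}+\sqrt a-1$ checks out, and the linear-independence lemma plus Weyl's criterion correctly yield a positive density of descent times, hence descents at infinitely many levels $k$. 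The gap is the one you name, and it is the entire difficulty: equidistribution gives density but says nothing about whether \emph{every} window of that fixed length contains a descent time, and nothing in your write-up supplies the required local statement. The appeal to Kesten-type bounded remainder sets is only an analogy; bounded discrepancy of a counting function does not by itself force a hit in every window of prescribed length, so step (ii) of your plan remains entirely open.

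The most valuable item in your proposal is the counterexample, and you should lead with it rather than bury it: for $a=2$ one checks that $\tau_s(2)=0$ only for $s=1$ and $\tau_s(2)=1$ only for $s\in\{2,3,4,5\}$, so there is no $s\in\Z^+$ with $\tau_s(2)=1$ and $\tau_{s+1}(2)=0$, and likewise none with $\tau_s(2)=2$ and $\tau_{s+1}(2)=1$. The conjecture as literally stated is therefore \emph{false}, and must be weakened to ``for all sufficiently large $k$'' (or restricted in $a$). Your heuristic for why $a$ adjacent to a square is the dangerous case --- the descent density $\theta(1-\phi)$ degenerating like $(\sqrt a)^{-1}$ --- is sound. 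I would recommend recasting your work as (i) a proved theorem (infinitely many descent levels for every admissible $a$) together with the explicit refutation of the stated conjecture, and (ii) a clearly labelled amended conjecture at the ``all sufficiently large $k$'' level, identifying the two-dimensional Diophantine/local-discrepancy estimate as the missing ingredient.
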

Note that the proof of Thm 1(2) shows that, if this conjecture is true, it must be due to the existence of some $t$ such that $s \sqrt{a} < t < s \sqrt{a+1}$ but $t + n < (s+1)\sqrt{a}$. As noted above, such a $t$ does not exist if $a=n^2$ or $a=n^2-1$.

The above observation can also be expressed as follows:   Define the set $S(a) = \{ s: \tau_s(a) > 0 \}$.  Then if $a=n^2$ or $a=n^2-1$ , the set $S(a)$ is upward-closed, i.e. $s \in S(a) \implies s+1 \in S(a)$.  However, $S(a)$ is not upward-closed in general.  For example, we have already shown that $2 \in S(n^2 + n)$ for all $n$, but it may easily be verified that $3 \notin S(n^2 + n)$ for any $n>1$.  In fact $S(n^2 + n)$ contains all even positive integers, but for any odd positive integer $k$ there exists $n_0$ such that for $n > n_0$, $k \notin S(n^2 + n)$.  

\subsection{For given $a$, how can we determine $k$ such that $\sigma(a) = \sigma_k(a)$?}  Figure~\ref{fig:sigma-k} and the discussion at the end of the previous section shows that for each $a$, there exists $k$ such that $\sigma(a) = \sigma_k(a)$.  However, there does not seem to be any obvious relationship between $a$ and $k$; we note that within the interval $100^2 < a < 101^2$ the set of values of $k$ for which $\sigma(a) = \sigma_k(a)$ for some $a$ is precisely $ \{1, 2, 3, \dots, 15, 18, 19, 22, 29, 40 \}$.  More generally, for any $n$ if we look on the interval $n^2 < a < (n+1)^2$, for what values of $k$ will there be some $a$ such that $\sigma(a) = \sigma_k(a)$?

\subsection{Is there a fractal structure in the graph of $\sigma(a)$?} 

In the first section of these notes I referred to the graph of $\sigma(a)$ as ``seemingly chaotic''.  Is there a sense in which this can be made precise?

Towards an answer to this question, we consider Fig.~\ref{fig:off-bound}, which graphs $\sigma(a)$ on the interval $0 \leq a \leq 2000$  \textit{only} for those values of $a$ for which $\sigma(a) > \sigma_1(a)$.  (These will be referred to below as ``off-bound points''.)  We note that Fig.~\ref{fig:off-bound} contains certain features that are reminiscent of Fig.~\ref{fig:sigma}.  In particular:

\begin{figure}[htbp]
  \centering
  \includegraphics[scale=.5]{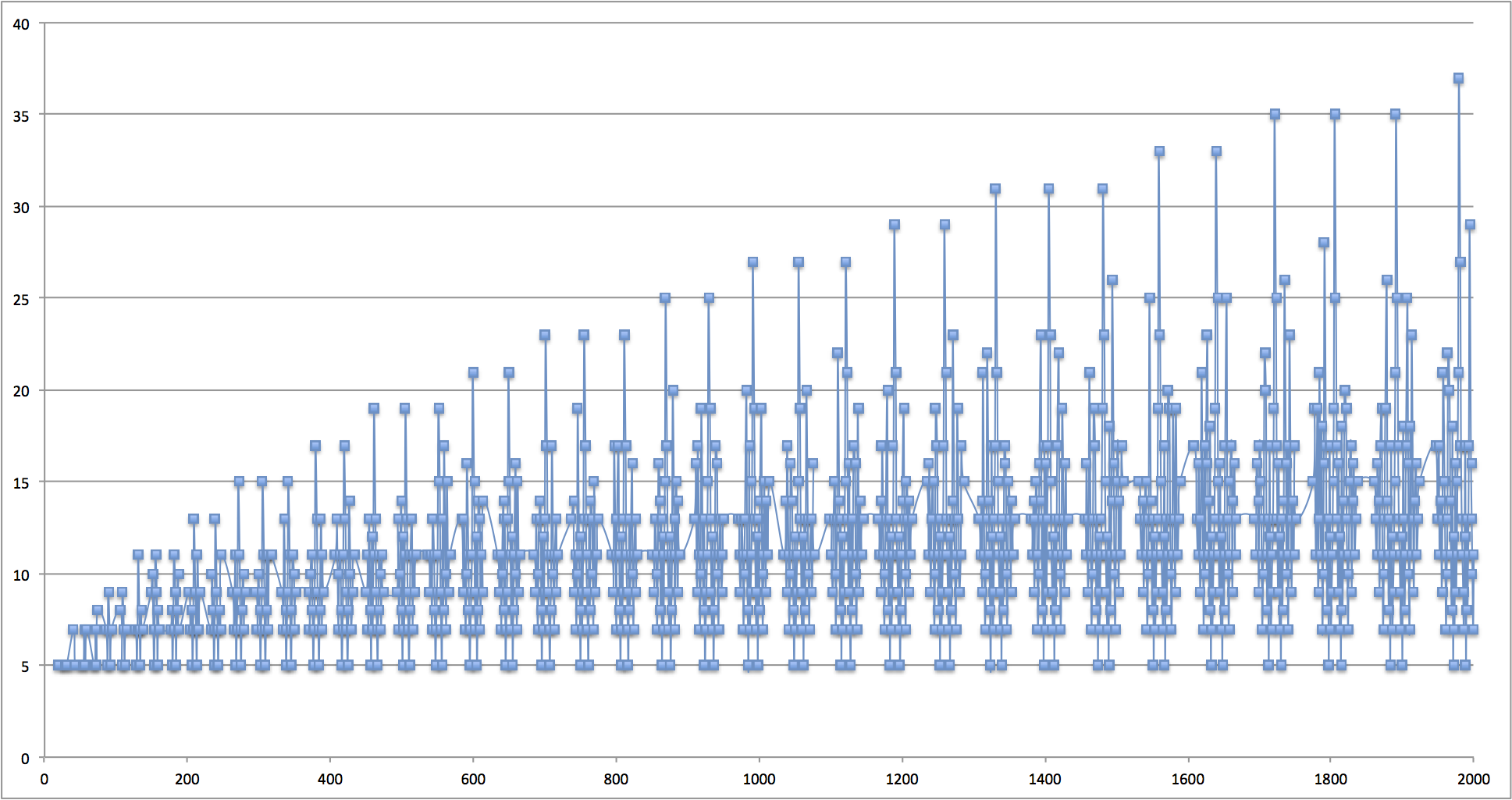}
  \caption{Off-bound points for $\sigma(a)$, $0 \leq a \leq 2000$}
  \label{fig:off-bound}
\end{figure}

\begin{itemize}

\item Off-bound values of $\sigma(a)$ fluctuate with intermittent spikes, located at values of $a$ that are of the form $a=n^2 + n - 1$; these peak values are located almost exactly halfway between the peak values of the full graph, which are located at $a=n^2$.

\item The values of $\sigma(a)$ at these off-bound peak values form the sequence of odd numbers, in alternating groups of $3$ and $2$; for example:
\begin{itemize}

\item The three off-bound peaks between $a=11^2$ and $a=14^2$ all have $\sigma(a)=11$;
\item The two off-bound peaks between $a=14^2$ and $a=16^2$ all have $\sigma(a)=13$;
\item The three off-bound peaks between $a=16^2$ and $a=19^2$ all have $\sigma(a)=15$;
\item The two off-bound peaks between $a=19^2$ and $a=21^2$ all have $\sigma(a)=17$; and so on.
\end {itemize} 

\item For $n \geq 7$, within each interval $n^2 \leq a \leq (n+1)^2$ there are precisely two off-bound points that attain the minimum value $\sigma(a)=5$; one of these minima occurs between $n^2$ and $n^2 + n - 1$, and the other occurs between $n^2 + n - 1$ and $(n+1)^2$.

\end{itemize}

\begin{figure}[htbp]
  \centering
  \includegraphics[scale=.5]{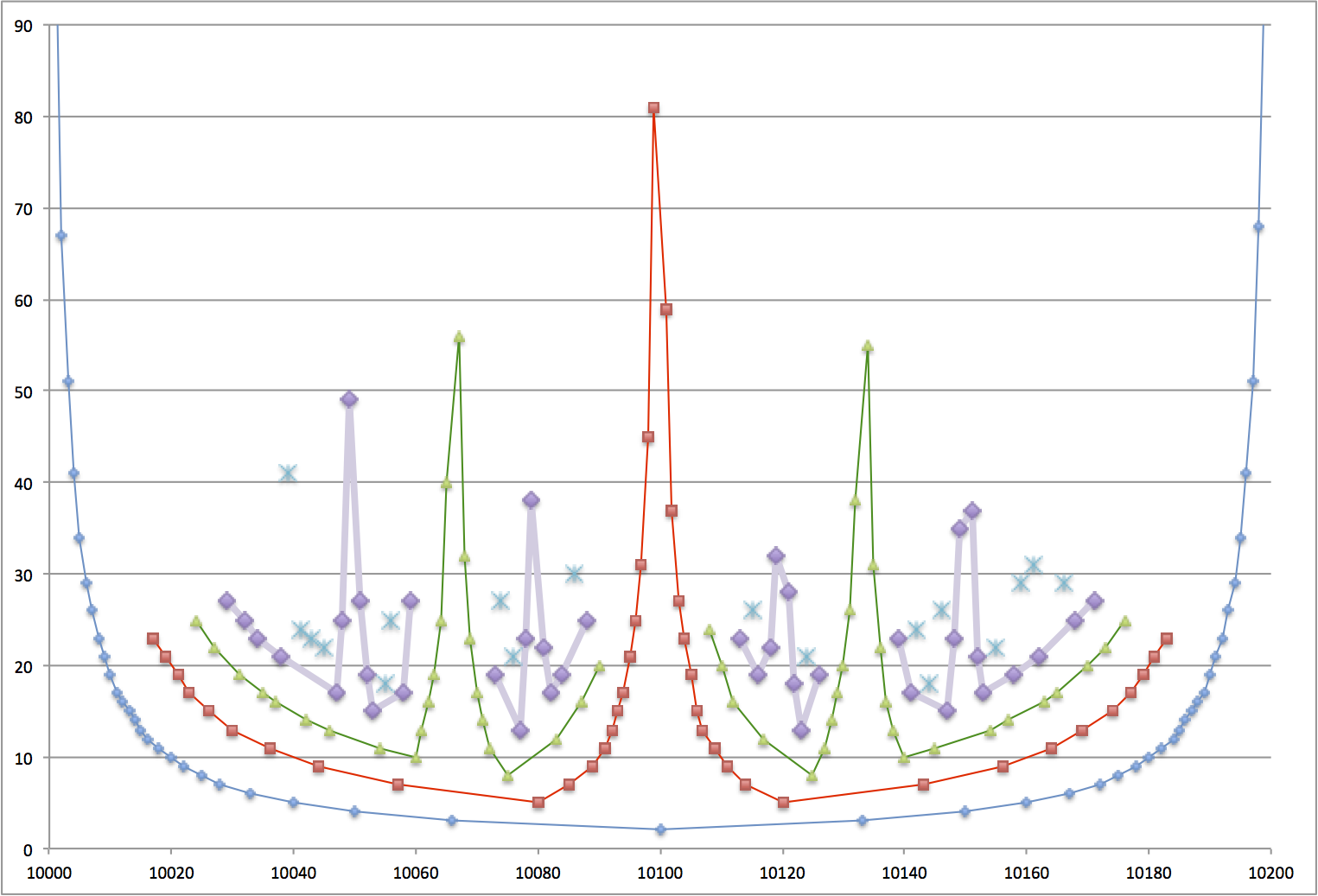}
  \caption{Apparent recursive structure in the on- and off-bound points for $\sigma(a)$, $100^2 \leq a \leq 101^2$.}
  \label{fig:recursive}
\end{figure}

We can summarize these observations rather loosely by saying that within each of the cup-shaped troughs of $\sigma_1$ between consecutive square integers, the off-bound points form \textit{two} cup-shaped troughs with roughly the same characteristics as the ``main'' trough.  Figure~\ref{fig:recursive} suggests that, as $n$ increases and the fraction of points between $n^2$ and $(n+1)^2$ that are off-bound grows, this doubling phenomenon may repeat recursively: note that the region above the lower bounding curve $\sigma_1(a)$ is subdivided into two subregions, each of which is in turn subdivided into two sub-subregions, and so on.  Although the later curves in this recursive process are more irregular then the earlier curves, in general as $a$ gets larger these curves become increasingly smooth and symmetric.

Doubling phenomena such as these are often characteristic of chaotic dynamical systems, and strongly suggest that there may be some way of interpreting these phenomena in terms of some underlying (but as yet unknown) dynamical process, with respect to which the functions may be understood as genuinely chaotic.

\section*{Acknowledgement}
Early versions of this paper were improved substantially by the generous comments of Hyman Bass, Benjamin Schmidt, and anonymous reviewers.  I am also indebted to Hagen von Eitzen, who provided most of the proof of Theorem 3 in a discussion on the \href{http://math.stackexchange.com/}{Mathematics Stack Exchange} website at \url{http://math.stackexchange.com/questions/1238197/}.


\end{document}